\theoremstyle{plain}
\newtheorem{Thm}{Theorem}
\newtheorem{Lem}[Thm]{Lemma}
\begin{document}

%begin Topmatter
\title[Curvature flow to Nirenberg problem]
{Curvature flow to Nirenberg problem}

\author{ Minchun Hong, Li Ma}

\address{Li Ma, Department of mathematical sciences \\
Tsinghua University \\
Beijing 100084 \\
China} \email{lma@math.tsinghua.edu.cn}

\address{M.C.Hong, Department of Mathematics \\
The University of Queensland \\
Brisbane, Queensland\\
Australia} \email{hong@maths.uq.edu.au}

\dedicatory{}
\date{July. 12th, 2008}

\begin{abstract}

In this note, we study the curvature flow to Nirenberg problem on
$S^2$ with non-negative nonlinearity. This flow was introduced by
Brendle and Struwe. Our result is that the Nirenberg problems has a
solution provided the prescribed non-negative Gaussian curvature $f$
has its positive part, which possesses non-degenerate critical
points such that $\Delta_{S^2} f>0$ at the saddle points.

{ \textbf{Mathematics Subject Classification 2000}: 53Cxx,35Jxx}

{ \textbf{Keywords}: Nirenberg problem, curvature flow, non-negative
nonlinearity}
\end{abstract}

\thanks{$^*$ Li Ma: Corresponding
     author. The research is partially supported by the National Natural Science
Foundation of China 10631020 and SRFDP 20060003002 } \maketitle

\section{Introduction}

In the interesting paper \cite{struwe}, M.Struwe studied a heat flow
method to the Nirenberg problem on $S^2$. This kind of heat flow in
conformal geometry was considered by S.Brendle in \cite{brandle}.
Given the Riemannian metric $g$ on $S^2$ with Gaussian curvature
$K$. Using the well-known Gauss-Bonnet formula
$$
\int_{S^2} Kdv_g=4\pi,
$$
we know that $K$ has to be positive some where. This gives a
necessary condition for the Nirenberg problem on $S^2$. Assuming the
prescribed curvature function $f$ being positive on $S^2$, the heat
flow for the Nirenberg problem $S^2$ is a family of metrics of the
form $g=e^{2u(x,t)}c$ satisfying
\begin{equation}\label{flow}
 u_t=\alpha f-K, \quad x\in S^2,
\quad t>0,
\end{equation}
where $c$ is the standard spherical metric on $S^2$, $u:S^2\times
(0,T)\to R$, and $\alpha=\alpha(t)$ is defined by
\begin{equation}\label{alpha}
\alpha \int_{S^2} fdv_g=4\pi. \end{equation}
 Here $dv_g$ is the
area element with respect to the metric $g$. It is easy to see
that
$$
\alpha_t\int_{S^2}fdv_g=2\alpha \int_{S^2}(K-\alpha f)fdv_g.
$$
 M.Struwe can
show that the flow exists globally, furthermore, the flow converges
at infinity provided $f$ is positive and possesses non-degenerate
critical points such that $\Delta_{S^2} f>0$ at the saddle points.
Here $\Delta_{S^2}:=\Delta$ is the Analyst's Laplacian on the
standard 2-sphere $(S^2,c)$. Recall that $\int_{S^2}dv_c=4\pi$. The
purpose of this paper is to relax his assumption by allowing the
function $f$ to have zeros.

Since we have
$$
K=e^{2u}(-\Delta u+1),
$$
the equation (\ref{flow}) define a nonlinear parabolic equation
for $u$, and the flow exists at least locally for any initial data
$u|_{t=0}=u_0$. Clearly, we have $$
\partial_t\int_{S^2}dv_g=2\int{S^2}u_tdv_g=0.
$$
We shall assume that the initial data $u_0$ satisfies the
condition
\begin{equation}\label{positive}
\int f e^{2u}dv_c>0.
\end{equation}
We shall show that this property is preserved along the flow. It
is easy to compute that
\begin{equation}\label{curvature}
K_t=-2u_tK-\Delta_g u_t=2K(K-\alpha f)+\Delta_g(K-\alpha f),
\end{equation}
where $\Delta_g=e^{-2u}\Delta$. Using (\ref{curvature}), we can
compute the growth rate of the Calabi energy $\int_{S^2}|K-\alpha
f|^2dv_g$.

Our main result is following
\begin{Thm}\label{main}
Let $f$ be a positive somewhere, non-negative smooth function on
$S^2$ with only non-degenerate critical points on the its positive
part $f_+$. Suppose that there are at least two local positive
maxima of $f$, and at all positive valued saddle points $q$ of $f$
there holds $\Delta_{S^2}f(q)>0$. Then $f$ is the Gaussian curvature
of the conformal metric $g=e^{2u}c$ on $S^2$.
\end{Thm}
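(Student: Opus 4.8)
The plan is to reduce the Nirenberg problem to the long-time convergence of the Brendle--Struwe flow \eqref{flow}. Observe first that if the flow converges as $t\to\infty$ to some $u_\infty$ with $u_t\to 0$, then $K_\infty=\alpha_\infty f$ for the constant $\alpha_\infty=\lim\alpha(t)>0$, the positivity coming from \eqref{alpha} together with the preserved inequality \eqref{positive}; rescaling the metric by the constant $\alpha_\infty$ turns $\alpha_\infty f$ into $f$ while staying conformal to $c$, so it suffices to produce a convergent flow. The engine of the argument is the monotonicity of the associated Liouville-type energy
$$
E_f(u)=\frac12\int_{S^2}|\nabla u|^2\,dv_c+\int_{S^2}u\,dv_c-2\pi\log\Big(\frac1{4\pi}\int_{S^2}f e^{2u}\,dv_c\Big),
$$
for which \eqref{flow} is, up to the conformal weight $e^{2u}$, the gradient flow; one checks $\tfrac{d}{dt}E_f=-\int_{S^2}(\alpha f-K)^2\,dv_g\le 0$, so $E_f$ decreases and $\int_0^\infty\!\int_{S^2}(\alpha f-K)^2\,dv_g\,dt<\infty$, producing a time sequence along which $\alpha f-K\to 0$ in $L^2$.

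Next I would establish global existence together with the a priori controls needed to pass to the limit, this being the first place the sign degeneracy of $f$ enters. Short-time existence is standard parabolic theory for \eqref{flow}. To continue the flow I would show \eqref{positive} is preserved---using $\alpha_t\int f\,dv_g=2\alpha\int(K-\alpha f)f\,dv_g$ and the Moser--Trudinger inequality to bound $\alpha(t)$ from above and below---and that the area is constant. The novelty over Struwe's strictly positive case is that where $f=0$ the driving term $\alpha f$ vanishes and $K$ may turn negative; I would control this by noting that the zeros of $f$ only weaken, never amplify, the parabolic forcing, so energy estimates together with the maximum principle applied to \eqref{curvature} still give uniform bounds on $u$ away from concentration and prevent the metric from collapsing on the zero set.

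With global existence and monotone energy in hand, the core is a concentration--compactness dichotomy along a sequence $t_k\to\infty$: either $u(t_k)$ is bounded in $H^1$ and converges subsequentially (then upgraded by elliptic regularity) to a solution, or the conformal volume $e^{2u(t_k)}\,dv_c$ concentrates. Because the total Gaussian curvature is $\int_{S^2}K\,dv_g=4\pi$, exactly the mass carried by a single standard bubble, the concentration is supported by one bubble at a single point $p$. A localized Pohozaev/Kazdan--Warner analysis at $p$ then forces $\nabla f(p)=0$ and $\Delta_{S^2}f(p)\le 0$, and, crucially, $f(p)>0$: near the zero set the curvature demanded by $\alpha f$ is too small to sustain a bubble, so $p\in f_+$, which is precisely why the hypotheses are imposed on $f_+$. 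Since nondegenerate local minima have $\Delta_{S^2}f>0$ and, by assumption, so do all positive saddle points, the only admissible concentration points are the nondegenerate local maxima of $f$.

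The final and hardest step is to rule out this residual concentration at a maximum by exploiting the hypothesis of at least two positive local maxima. The idea is to run a min-max/deformation argument for the monotone $E_f$ along the flow: using the maxima $p_1,p_2$ one constructs an admissible family of initial data---for instance paths of highly concentrated test metrics linking neighborhoods of $p_1$ and $p_2$, equivalently a degree computation for the center-of-mass map into $f_+$---whose min-max value lies strictly below the energy threshold $\Lambda$ at which bubbling can occur. Here the expansion of $E_f$ on a bubble centered at a critical point $p$, whose leading correction is governed by the sign of $\Delta_{S^2}f(p)$, shows that single-point concentration sits at or above $\Lambda$; hence the monotone flow started from such data cannot concentrate and must converge. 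I expect the main obstacle to be exactly this matching of the topological two-maxima condition with the energy bookkeeping of the bubble---making the min-max level quantitatively beat the concentration level while simultaneously controlling the degeneracy of $f$ near its zero set throughout the blow-up analysis.
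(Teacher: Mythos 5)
Your outline tracks the paper closely up to and including the concentration--compactness dichotomy: the same flow \eqref{flow}--\eqref{alpha}, the same monotone energy (your $E_f$ is $2\pi$ times the paper's), the two-sided bound \eqref{struwe2} on $\alpha(t)$ obtained from \eqref{monotone} together with Onofri's inequality \eqref{onof}, curvature decay in $L^2$, and single-point concentration with $f$ positive at the blow-up point. But your closing step contains a genuine gap: the strict energy separation you posit cannot exist. You want a threshold $\Lambda$ such that every single-point concentration occurs at level $\geq\Lambda$ while a two-maxima path family has min-max value $<\Lambda$. However, along the flow $E_f(u)\geq -\log\max_{S^2}f$ by \eqref{onof}, and if the flow concentrates at the global maximum $p$ of $f$, then (case 2 of the paper's Lemma) $E(u(t))=E(v(t))\to 0$ and $\int_{S^2}fe^{2u}dc\to f(p)$, so the energy descends exactly to $\inf E_f=-\log\max_{S^2}f$: bubbling happens at the absolute minimum level, below any min-max value, and no such $\Lambda$ exists. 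Your second-order expansion also points the wrong way: at a nondegenerate maximum $\Delta_{S^2}f(p)<0$, so bubble energies approach $-\log f(p)$ strictly from above --- concentration at maxima is an energy-minimizing limit --- whereas it is at saddles with $\Delta_{S^2}f>0$ that bubbles dip strictly below their limit level; that is the mechanism exploited by Chang--Yang \cite{chang}, whose variational scheme is considerably subtler than a level comparison. The paper avoids all of this: following Sections 4 and 5 of \cite{struwe}, a concentrating flow obeys a finite-dimensional shadow dynamics in which the concentration point is driven by $\nabla f$ and must land at a critical point with $\Delta_{S^2}f\le 0$ (hence, under the hypotheses, a local maximum); one then flows a whole family of bubble initial data parametrized over $S^2\times(0,1]$ and obtains a degree-theoretic contradiction unless some trajectory converges. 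The two-maxima hypothesis enters as the topological count $\sum(-1)^{\mathrm{ind}}\neq 1$ over critical points with $\Delta_{S^2}f<0$, not as an energy bookkeeping; this also explains why your proposed ``localized Pohozaev/Kazdan--Warner analysis'' is, in the flow setting, really Struwe's dynamical argument rather than a static identity.

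A second, smaller gap: the exclusion of concentration on $\{f=0\}$ --- which is precisely the paper's new contribution beyond \cite{struwe} --- appears in your proposal only as the heuristic that near the zero set the forcing is ``too small to sustain a bubble.'' The paper proves it: the blow-up limit $\hat u_\infty$ solves $-\Delta_{R^2}\hat u_\infty=K_\infty(q)e^{2\hat u_\infty}$ on $R^2$ with $\int_{R^2}e^{2\hat u_\infty}dz\le 4\pi$; the case $K_\infty(q)<0$ is excluded by a nonexistence result \cite{ma}, and if $K_\infty(q)=0$ then $\hat u_\infty$ is harmonic, its circular averages equal $A+B\log r$, continuity at the origin forces $B=0$, and a constant average is incompatible with finite area by Jensen's inequality. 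Relatedly, your appeal to the maximum principle applied to \eqref{curvature} for global existence is unsupported and is not what is needed: the paper instead shows that \eqref{monotone} combined with \eqref{onof} preserves \eqref{positive} and yields \eqref{struwe2}, which replaces Struwe's positivity-based bound on $\alpha$ and lets his Lemmas 3.2--3.4 run unchanged.
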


Note that this result is an extension of the famous result of
Chang-Yang \cite{chang} where only positive $f$ has been
considered. A similar result for Q-curvature flow has been
obtained in \cite{ma2}.

 For
simplifying notations, we shall use the conventions that
$dc=\frac{dv_c}{4\pi}$ and $\bar{u}=\bar{u}(t)$ defined by
$$
\int_{S^2}(u-\bar{u})dv_c=0.
$$

\section{Basic properties of the flow}

Recall the following result of Onofri-Hong \cite{ono} that
\begin{equation}\label{onof}
\int_{S^2}(|\nabla u|^2+2u)dc\geq \log(\int_{S^2} e^{2u}dc)=0,
\end{equation}
where $|\nabla u|^2$ is the norm of the gradient of the function
$u$ with respect to the standard metric $c$. Here we have used the
fact that $\int_{S^2} e^{2u}dc=1$ along the flow (\ref{flow}).

We show that this condition is preserved along the flow
(\ref{flow}). In fact, letting
$$
E(u)=\int_{S^2}(|\nabla u|^2+2u)dc
$$
be the Liouville energy of $u$ and letting
$$
E_f(u)=E(u)-\log (\int_{S^2}fe^{2u}dc)
$$
be the energy function for the flow (\ref{flow}), we then compute
that
\begin{equation}\label{monotone}
\partial_tE_f(u)=-2\int_{S^2}|\alpha f-K|^2dv_g\leq 0.
\end{equation}
One may see Lemma 2.1 in \cite{struwe} for a proof. Hence
$$
E_f(u(t))\leq E_f(u_0), \quad t>0.
$$
After using the inequality (\ref{onof}) we have
$$
\log(1/\int_{S^2}fe^{2u}dc) \leq E_f(u_0),
$$
which implies that $\int_{S^2} fe^{2u}dv_c>0$ and $$
e^{E_f(u_0)}\int_{S^2}e^{2u}dc\leq \int_{S^2}fe^{2u}dc.
$$
 Note also that
$\int_{S^2}fe^{2u}dc=1/\alpha(t)$. Hence,
$$
\alpha(t)\leq \frac{1}{e^{E_f(u_0)}}.
$$

Using the definition of $\alpha(t)$ we have
$$
\alpha(t)\geq \frac{1}{\max_{S^2}f}.
$$
We then conclude that $\alpha(t)$ is uniformly bounded along the
flow, i.e.,
\begin{equation}\label{struwe2}
\frac{1}{\max_{S^2}f}\leq\alpha(t)\leq \frac{1}{e^{E_f(u_0)}}.
\end{equation}
We shall use this inequality to replace (26) in \cite{struwe} in
the study of the normalized flow, which will be defined soon
following the work of M.Struwe \cite{struwe}. If we have a global
flow, then using (\ref{monotone}) we have
$$
2\int_0^{\infty}dt\int_{S^2} |\alpha f-K|^2dv_g\leq 4\pi
(E_f(u_0)+\log max_{S^2} f).
$$

Hence we have a suitable sequence $t_l\to\infty$ with associated
metrics $g_l=g(t_l)$ and $\alpha (t_l)\to \alpha>0$, and letting
$K_l=K(g_l)$, such that
$$
\int_{S^2}|K_l-\alpha f|^2\to 0, \; \; (t_l\to\infty).
$$
Therefore, once we have a limiting metric $g_{\infty}$ of the
sequence of the metrics $g_l$, it follows that
$K(g_{\infty})=\alpha f$. After a re-scaling, we see that $f$ is
the Gaussian curvature of the metric $\beta g_{\infty}$ for some
$\beta>0$, which implies our Theorem \ref{main}.

\section{Normalized flow and the proof of Theorem \ref{main}}

We now introduce a normalized flow. For the given flow
$g(t)=e^{2u(t)}c$ on $S^2$, there exists a family of conformal
diffeomorphisms $\phi=\phi(t):S^2\to S^2$, which depends smoothly
on the time variable $t$, such that for the metrics $h=\phi^*g$,
we have
$$
\int_{S^2} x dv_h=0, \; for \; all \; t\geq 0.
$$
Here $x=(x^1,x^2,x^3)\in S^2\subset R^3$ is a position vector of
the standard 2-sphere. Let
$$
v=u\circ \phi+\frac{1}{2}\log(det(d\phi)).
$$
Then we have $h=e^{2v}c$. Using the conformal invariance of the
Liouville energy \cite{chang}, we have
$$
E(v)=E(u),
$$
and furthermore,
$$
Vol(S^2,h)=Vol(S^2,g)=4\pi, \; for \; all \; t\geq 0.
$$

Assume $u(t)$ satisfies (\ref{flow}) and (\ref{alpha}). Then we
have the uniform energy bounds
$$
0\leq E(v)\leq E(u)=E_f(u)+\log (\int_{S^2}fe^{2u}dc)\leq
E_f(u_0)+\log(\max_{S^2}f).
$$

Using Jensen's inequality we have
$$
2\bar{v}:=\int_{S^2} 2v dc\leq \log(\int_{S^2} e^{2v}dc)=0.
$$

Using this we can obtain the uniform $H^1$ norm bounds of $v$ for
all $t\geq 0$ that
$$
\sup_t|v(t)|_{H^1(S^2)}\leq C.
$$
See the proof of Lemma 3.2 in \cite{struwe}. Using the
Aubin-Moser-Trudinger inequality \cite{Au98} we further have
$$
\sup_t\int_{S^2} e^{2pv(t)}dc\leq C(p)
$$
for any $p\geq 1$.

Note that
$$
v_t=u_t\circ \phi+\frac{1}{2} e^{-2v}div_{S^2}(\xi e^{2v})
$$
where $\xi=(d\phi)^{-1}\phi_t$ is the vector field on $S^2$
generating the flow $(\phi(t))$, $t\geq 0$, as in \cite{struwe},
formula (17), with the uniform bound
$$
|\xi|_{L^{\infty}(S^2)}\leq C\int_{S^2}|\alpha f-K|^2dv_g.
$$
With the help of this bound, we can show (see Lemma 3.3 in
\cite{struwe}) that for any $T>0$, it holds
$$
\sup_{0\leq t<T}\int_{S^2}e^{4|u(t)|}dc<+\infty.
$$
Following the method of M.Struwe \cite{struwe1} (see also Lemma
3.4 in  \cite{struwe}) and using the bound (\ref{struwe2}) and the
growth rate of $\alpha$, we can show that
$$
\int_{S^2}|\alpha f-K|^2dv_g\to 0
$$
as $t\to\infty$. Once getting this curvature decay estimate, we
can come to consider the concentration behavior of the metrics
$g(t)$. Following \cite{struwe1}, we show that

\begin{Lem} Let $(u_l)$ be a sequence of smooth functions on $S^2$
with associated metrics $g_l=e^{2u_l}c$ with $vol(S^2,g_l)=4\pi$,
$l=1,2,...$. Suppose that there is a smooth function $K_{\infty}$,
which is positive somewhere and non-negative in $S^2$ such that
$$
|K(g_l)-K_{\infty}|_{L^2(S^2,g_l)}\to 0
$$
as $l\to\infty$. Let $h_l=\phi_l^*g_l=e^{2v_l}c$ be defined as
before. Then we have either

 1) for a subsequence $l\to\infty$ we have $u_l\to u_{\infty}$ in
 $H^2(S^2, c)$, where $g_{\infty}=e^{2u_{\infty}}c$ has Gaussian
 curvature $K_{\infty}$, or

 2) there exists a subsequence, still denoted by  $(u_l)$ and a point $q\in S^2$ with $K_{\infty}(q)>0$,
  such that the metrics $g_l$ has a measure concentration that
  $$
dv_{g_l}\to 4\pi \delta_q
  $$
weakly in the sense of measures, while $h_l\to c$ in $H^2(S^2,c)$
and in particular, $K(h_l)\to 1$ in $L^2(S^2)$. Moreover, in the
latter case the conformal diffeomorphisms $\phi_l$ weakly
converges in $H^1(S^2)$ to the constant map $\phi_{\infty}=q$.
\end{Lem}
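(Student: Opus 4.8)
The plan is to treat this as a concentration-compactness statement in which the dichotomy is governed by the behaviour of the conformal diffeomorphisms $\phi_l$. First I record the transformation laws. Since $h_l=\phi_l^*g_l$, the map $\phi_l$ is an isometry from $(S^2,h_l)$ to $(S^2,g_l)$, so $K(h_l)=K(g_l)\circ\phi_l$ and, by invariance of the norm under pullback, $\int_{S^2}|K(h_l)-K_\infty\circ\phi_l|^2\,dv_{h_l}=\int_{S^2}|K(g_l)-K_\infty|^2\,dv_{g_l}\to0$. By Gauss--Bonnet each $v_l$ solves $-\Delta v_l+1=K(h_l)e^{2v_l}$ with $\int_{S^2}K(h_l)\,dv_{h_l}=4\pi$. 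Using the uniform $H^1$ bound on $v_l$ and the Aubin--Moser--Trudinger bounds $\int_{S^2}e^{2pv_l}\,dc\le C(p)$ recalled in Section 2, I pass to a subsequence so that $v_l\rightharpoonup v_\infty$ in $H^1$, $e^{2v_l}\,dc\rightharpoonup\mu$ weakly as measures with $\int x\,d\mu=0$ by the centering, and $\phi_l\rightharpoonup\phi_\infty$ in $H^1$.

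The first main step is to show that the normalized sequence $v_l$ is always precompact in $H^2$, so that the alternative lives entirely in $\phi_l$. The point is that the Gauss right-hand side is bounded in $L^q$ for $q<2$: from $\|K(h_l)\|_{L^2(h_l)}\le C$ one gets $\int_{S^2}K(h_l)^2e^{2v_l}\,dc\le C$, so $K(h_l)e^{v_l}$ is bounded in $L^2(dc)$, and multiplying by $e^{v_l}$, bounded in every $L^p$, gives $K(h_l)e^{2v_l}$ bounded in $L^q$ for every $q<2$ by H\"older. Taking $q\in(1,2)$, elliptic estimates bound $v_l$ in $W^{2,q}(S^2)$, which embeds into $C^{0,\alpha}(S^2)$ in dimension two; along a subsequence $v_l\to v_\infty$ uniformly, hence $e^{2v_l}\to e^{2v_\infty}$ uniformly, and a bootstrap upgrades this to $v_l\to v_\infty$ in $H^2$. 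Thus $h_l\to h_\infty=e^{2v_\infty}c$ in $H^2$, $K(h_l)\to K(h_\infty)$ in $L^2$, and $\int x\,dv_{h_\infty}=0$.

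It remains to dichotomize according to $\phi_l$, using the standard fact that a sequence of M\"obius transformations of $S^2$ subconverges either to a conformal diffeomorphism or to a constant map. In the first case $\phi_\infty$ is a genuine diffeomorphism, the relation $u_l=v_l\circ\phi_l^{-1}-\tfrac12\log\det(d\phi_l^{-1})$ transfers the $H^2$ convergence of $v_l$ to $u_l\to u_\infty$ in $H^2$, and since $g_l\to g_\infty=e^{2u_\infty}c$ in $H^2$ while $K(g_l)\to K_\infty$ in $L^2$, the limit satisfies $K(g_\infty)=K_\infty$; this is alternative 1). In the second case $\phi_\infty\equiv q$, and for any test function $\psi$ one has $\int_{S^2}\psi\,dv_{g_l}=\int_{S^2}\psi\circ\phi_l\,dv_{h_l}\to4\pi\psi(q)$, that is $dv_{g_l}\to4\pi\delta_q$. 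Passing to the limit in $\int_{S^2}|K(h_l)-K_\infty\circ\phi_l|^2\,dv_{h_l}\to0$, and using $\phi_l\to q$ together with the $H^2$ compactness of $h_l$, forces $K(h_\infty)=K_\infty(q)$, a constant; Gauss--Bonnet with $\mathrm{vol}(S^2,h_\infty)=4\pi$ gives $K_\infty(q)=1$, and Obata's theorem together with the centering $\int x\,dv_{h_\infty}=0$ identifies $h_\infty=c$. Hence $v_\infty=0$, $h_l\to c$ in $H^2$, $K(h_l)\to1$ in $L^2$, and $K_\infty(q)=1>0$, which is alternative 2).

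The step I expect to be most delicate is the Case 2 reconciliation: $K(g_l)\to K_\infty$ is controlled only in the moving norm $L^2(g_l)$ whose mass concentrates at $q$, so the limit must be extracted through the pulled-back formulation on the compact family $h_l$, where one has to justify that $K_\infty\circ\phi_l\to K_\infty(q)$ in $L^2(h_l)$ despite the single point at which $\phi_l$ fails to converge to $q$; this is handled because that point carries vanishing $h_l$-measure. A secondary point demanding care is that the full-range bounds $\int e^{2pv_l}\,dc\le C(p)$ underlying the $W^{2,q}$ estimate rely on the improved Aubin--Moser--Trudinger inequality for centered measures, so the centering constraint, far from being cosmetic, is what prevents single-atom concentration of $v_l$ and thereby drives the $H^2$-precompactness; the hypothesis that $K_\infty$ is non-negative and positive somewhere is consistent with, and guaranteed by, the outcome $K_\infty(q)=1>0$ at the concentration point.
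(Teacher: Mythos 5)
Your plan inverts the paper's argument: you try to prove first that the normalized factors $v_l$ are \emph{unconditionally} precompact in $H^2$, and then read the dichotomy off the M\"obius maps $\phi_l$. That would indeed be a slick route (it even gives $K_\infty(q)=1$ at a concentration point directly from Gauss--Bonnet, so the positivity $K_\infty(q)>0$ comes for free), but your first main step rests on a hypothesis the lemma does not contain. You invoke ``the uniform $H^1$ bound on $v_l$ \dots recalled in Section 2'' to get $\sup_l\int_{S^2}e^{2pv_l}\,dc\le C(p)$, which is what powers the whole elliptic chain $K(h_l)e^{2v_l}\in L^q$, $q<2$, hence $W^{2,q}\hookrightarrow C^{0,\alpha}$, hence $H^2$. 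But the Section 2 bound is a \emph{flow-specific} fact: it is derived from the monotonicity of $E_f$ along (\ref{flow}) together with the conformal invariance $E(v)=E(u)$, so that $E(v_l)=E(u_l)\le E_f(u_0)+\log\max_{S^2}f$, and then Jensen plus the centered (Aubin) inequality control $\|v_l\|_{H^1}$. The lemma, however, is stated for an \emph{arbitrary} sequence $(u_l)$ with only $vol(S^2,g_l)=4\pi$ and $\|K(g_l)-K_\infty\|_{L^2(S^2,g_l)}\to0$; no energy bound on $u_l$ (equivalently on $v_l$) is assumed, and none follows trivially from these hypotheses. Extracting $H^1$ control of the centered representative from $L^2$ curvature bounds alone is essentially the content of the concentration-compactness analysis itself, so as written your argument assumes the hardest part of the lemma. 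Your closing remark that ``the centering \dots prevents single-atom concentration of $v_l$'' conflates two things: Aubin's improvement sharpens the constant in the Moser--Trudinger inequality for centered configurations, but it still needs an a priori energy bound as input; centering by itself does not bound $\|v_l\|_{H^1}$.

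The paper's proof (following Struwe's Lemma 3.5) goes the other way: it performs a blow-up analysis at a concentration point, choosing $q_l$, $r_l$ with $\int_{B(q_l,r_l)}|K(g_l)|\,dv_{g_l}=\pi$, rescaling to obtain an entire solution of the Liouville equation $-\Delta_{R^2}\hat u_\infty=K_\infty(q)e^{2\hat u_\infty}$ with volume at most $4\pi$, and then---this is the paper's new contribution relative to Struwe---excluding $K_\infty(q)<0$ (no entire solutions, citing \cite{ma}) and $K_\infty(q)=0$ (an entire harmonic $\hat u_\infty$ has circle averages $A+B\log r$, continuity at $r=0$ forces $B=0$, and Jensen's inequality then contradicts finite volume), before applying the Chen--Li classification to conclude that the full mass $4\pi$ concentrates at a single point. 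Your proposal skips this exclusion entirely; in your scheme it would be subsumed by the Gauss--Bonnet identity, but only once the $H^2$-compactness of $h_l$ is legitimately established. To repair your proof you must either add the hypothesis $\sup_lE(u_l)<\infty$ to the lemma (which does hold along the flow where the lemma is applied, so your argument would suffice for the paper's purposes, though not for the lemma as stated), or replace the cited $H^1$ bound by an honest derivation from the curvature hypotheses---which in practice means reinstating the blow-up analysis you were trying to avoid. The remaining steps of your proposal (the isometry $K(h_l)=K(g_l)\circ\phi_l$, the M\"obius dichotomy, the dominated-convergence treatment of $K_\infty\circ\phi_l$, and the identification $h_\infty=c$ from $K\equiv1$ plus centering) are sound conditional on that compactness.
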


\begin{proof}
 The case 1) can be proved as Lemma 3.5 in \cite{struwe}. So we need only to
 prove the case 2). As in \cite{struwe}, we choose $q_l\in S^2$
 and radii $r_l>0$ such that
 $$
\sup_{q\in S^2}\int_{B(q,r_l)}|K(g_l)|dv_{g_l}\leq
\int_{B(q_l,r_l)}|K(g_l)|dv_{g_l}=\pi,
 $$
where $B(q,r_l)$ is the geodesic ball in $(S^2,g_l)$. Then we have
$r_l\to 0$ and we may assume that $q_l\to q$ as $l\to\infty$. For
each $l$, we introduce $\phi_l$ as in Lemma 3.5 in \cite{struwe}
so that the functions
$$
\hat{u}_l=u_l\circ \phi_l+\frac{1}{2}\log (det(d\phi_l))
$$
satisfy the conformal Gaussian curvature equation
$$
-\Delta_{R^2} \hat{u}_l=\hat{K}_l e^{2\hat{u}_l}, \; on \; R^2,
$$
where $\hat{K}_l=K(g_l)\circ\phi$ and $\Delta_{R^2}$ is the
Laplacian operator of the standard Euclidean metric $g_{R^2}$.
Note that for $\hat{g}_l=\phi^*g_l=e^{2\hat{u}_l}g_{R^2}$, we have
$$
Vol(R^2, \hat{g}_l)=Vol(S^2,g_l)=4\pi.
$$
Arguing as in \cite{struwe}, we can conclude a convergent
subsequence $\hat{u}_l\to \hat{u}_{\infty}$ in $H^2_{loc}(R^2)$
where $\hat{u}_{\infty}$ satisfies the Liouville equation
$$
-\Delta_{R^2} \hat{u}_{\infty}=\hat{K}_{\infty}(q)
e^{2\hat{u}_{\infty}}, \; on \; R^2,
$$
with the finite volume $\int_{R^2} e^{2\hat{u}_{\infty}}dz\leq
4\pi$.

We remark that blow up point only occurs at point where
$K_{\infty}>0$ provided we allow $K_{\infty}$ to change sign. We
only need to exclude the case when $K_{\infty}(q)\leq 0$. It is
clear that $K_{\infty}(q)< 0$ can not occur since there is no such a
solution on the whole plane $R^2$ \cite{ma}. If $K_{\infty}(q)=0$,
then $\hat{u}:=\hat{u}_{\infty}$ is a harmonic function in $R^2$.
Let $\bar{u}(r)$ be the average of $u$ on the circle $\partial
B_r(0)\subset R^2$. Then we have
$$
\Delta_{R^2} \bar{u}=0.
$$
Hence $\bar{u}=A+B\log r$ for some constants $A$ and $B$, where
$r=|x|$. Since $\bar{u}$ is a continuous function on $[0,\infty)$,
we have $\bar{u}=A$, which is impossible since we have by Jensen's
inequality that
$$
2\pi\int_0^{\infty}e^{2\bar{u}(r)} rdr\leq \int_{R^2}
e^{2\hat{u}_{\infty}}dz\leq 4\pi.
$$

 We now have $K_{\infty}(q)> 0$. Recall that we have assumed
 $K_{\infty}\geq 0$.
 So we can follow the proof of Lemma 3.5 in
\cite{struwe}. In fact, using the classification result of Chen-Li
\cite{chenli} we know that $\hat{u}_{\infty}$ can be obtained from
stereographic projection $S^2\to R^2$ with
$$
\int_{R^2} K_{\infty}(q)e^{2\hat{u}_{\infty}}dz= 4\pi.
$$

Thus for any large $R>0$, we have error $\circ(1)\to 0$ as $l\to
\infty$ such that
$$
4\pi=\int\int_{R^2} K_{\infty}(q)e^{2\hat{u}_{\infty}}dz\leq
\int_{B_R(q)} K_l dv_l+\circ(1)\leq \int_{B_R(q)} |K_l|
dv_l+\circ(1).
$$

Then the remaining part can be derived as in \cite{struwe}. We
confer to \cite{struwe} for the full proof.

\end{proof}

With this understanding, we can do the same finite-dimensional
dynamics analysis as in section 4 in \cite{struwe}. Then arguing
as in section 5 in \cite{struwe} we can prove Theorem \ref{main}.
By now the argument is well-known, we omit the detail and refer to
\cite{struwe} for full discussion.

\end{document}